\newtheorem{prethm}{{\bf Theorem}}
\newenvironment{thm}{\begin{prethm}{\hspace{-0.5
               em}{\bf}}}{\end{prethm}}
\newtheorem{prepro}{{\bf Theorem}}
\newtheorem{preprop}{{\bf Proposition}}
\newenvironment{prop}{\begin{preprop}{\hspace{-0.5
               em}{\bf}}}{\end{preprop}}
\newtheorem{preex}{{\bf Example}}
\newtheorem{precor}{{\bf Corollary}}
\newenvironment{cor}{\begin{precor}{\hspace{-0.5
               em}{\bf}}}{\end{precor}}
\newtheorem{preremark}{{\bf Remark}}
\newtheorem{prelem}{{\bf Lemma}}
\newenvironment{lem}{\begin{prelem}{\hspace{-0.5
               em}{\bf}}}{\end{prelem}}
\newtheorem{preproof}{{\bf Proof.}}
\newenvironment{proof}[1]{\begin{preproof}{\rm
               #1}\hfill{$\Box$}}{\end{preproof}}
\date{}
\begin{document}

\title{\bf On the signed graphs with two distinct eigenvalues}
\author{\large   F. Ramezani $^1$
  \\ {\it Department of Mathematics,}
\\ {\it  K.N.Toosi University of Technology, Tehran, Iran}
\\ {\it P.O. Box 16315-1618}
 \\[.3cm]
 {\it  School of Mathematics,}
\\ {\it Institute for Research in Fundamental Sciences (IPM),}
 \\{\it P.O. Box $19395$-$5746$, Tehran, Iran}}
\maketitle  \footnotetext[1]{\tt Email: ramezani@kntu.ac.ir.}

 {\bf Abstract}.
We consider signed graphs, i.e, graphs with positive or negative
signs on their edges. We construct some families of bipartite
signed graphs with only two distinct eigenvalues. This leads to
constructing infinite families of regular Ramanujan
graphs.

\textbf{Keywords:} Signed graphs, two distinct eigenvalues,
Ramanujan graphs.

\textbf{Mathematics Subject Classification:} 05C50,05C22.

\section{Introduction}
We consider only simple graphs, i.e., graphs with out loops and
multiple edges. The vertex set and edge set of the graph $G$ will
be denoted by $V(G)$ and $E(G)$, respectively. If there is no
doubt about $G$ we simply write $V$ and $E$. A \textit{signature}
on a graph $G$ is a function $s:E\rightarrow \{1,-1\}$. A graph
$G$ provided with a signature $s$ is called a \textit{signed
graph}, and will be denoted by $(G,s)$. We call the graph $G$ the
\textit{ground graph} of the signed graph $(G,s)$. The
\textit{adjacency matrix}, $A^s$ of the signed graph $(G,s)$ on
the vertex set $V=\{v_1,v_2,\ldots,v_n\}$, is an $n\times n$
matrix whose entries are $$A^s(i,j)=\left\{
\begin{array}{ll}
1, & \hbox{if $v_i$ is adjacent to $v_j$ and $s(\{v_i,v_j\})=1$;} \\
-1, & \hbox{ $v_i$ is adjacent to $v_j$ and $s(\{v_i,v_j\})=-1$;} \\
0, & \hbox{otherwise.}
\end{array}
\right.
$$ The $n\times n$ matrix $A$, whose entries
are the absolute values of the entries of $A^s$, is called the
\textit{ordinary adjacency matrix} of the graph $G$ and will be
denoted by $A(G)=|A^s|$. Note that any symmetric
$(0,\pm1)-$matrix $A$ with zero entries on diagonal is
corresponding to a signature of the graph whose adjacency matrix
is $|A|$. Then we some times use the symmetric matrix $A$ rather
than the signature which is in its correspondence. The
\textit{spectrum} of a signed graph is the eigenvalues of its
adjacency matrix. By $O_n$, $J_n$ and $I_n$ we mean the all zero
matrix, all one matrix, and identity matrix of size $n$,
respectively. If there is no doubt about size of the matrices we
simply write $O,J,I$. We say the $n\times n$ matrices $A$ and $B$
commute if $AB=BA$. An $n\times n$ matrix $A$ is called
anti-symmetric if $A^t=-A$, where $A^t$ is the transpose of the
matrix $A$. For an $n\times n$ signed matrix $A$, by $A^*$ we
mean the matrix $\begin{pmatrix}
  O_n & A \\
  A^t & O_n
\end{pmatrix}$. An $n\times n$
matrix $C$ is called \textit{orthogonal signed matrix} if the
entries of $C$ belongs to the set $\{0,1,-1\}$ and
$CC^t=C^tC=\alpha I_n$, where $\alpha$ is a positive integer. A
signed graph is called \textit{orthogonal} if its adjacency
matrix is an orthogonal signed matrix. Recently some problems on
the spectrum of signed adjacency matrices have attracted many
studies. In \cite{BE}, the authors have considered the lollipop
graph and proved its signed graphs are determined by their
spectrum. Energy of signed matrices has been considered in
\cite{EN}. The spectrum of signed graphs can be used to find the
spectrum of $2$-lifts of graphs, which leads to some results on
the existence of infinite families of regular Ramanujan graphs
with a fixed degree (cf. \cite{LPV}). It is known that the only
graphs with two distinct eigenvalues of the ordinary adjacency
matrix are the complete graphs. In this article we consider
bipartite graphs and find some signatures of them which lead to
signed graphs with only two distinct eigenvalues. A Ramanujan
graph, is a $d-$regular graph whose the second largest eigenvalue
is less than or equal to $2\sqrt{d-1}$. These graphs have several
applications to complexity theory, design of robust computer
networks, and the theory of error-correcting codes, see \cite{HO}.
In \cite{MSS} the authors have proved the existence of infinite
families of regular bipartite Ramanujan graphs of every degree
bigger than $2$. Here, using the Hadamard matrices, we construct
some regular bipartite Ramanujan graphs. A \textit{Conference
matrix} $Co_n$ is an $n\times n$ $(0,\pm1)-$matrix with $0$ on the
diagonal and $\pm1$ elsewhere such that $Co_n^tCo_n=(n-1)I_n$.
During the preparation of this paper we encounter interesting
results from other combinatorial areas such as regular two-graphs,
Hadamard matrices, Conference matrices and Orthogonal codes.
\section{preliminaries}
In this section we present some results which will be used to
construct signed graphs with only two distinct eigenvalues.

In a signed graph $(G,s)$ by \textit{resigning} at a vertex $v\in
V(G)$ we mean multiplying the signs of all the edges incident to
$v$ by $-1$. Two signed graphs $(G,s)$ and $(G,s')$ are called
\textit{equivalent} if one is obtained from the other by a
sequence of resigning around prescribed vertices. Otherwise we
call them \textit{distinct}. Most properties of signed graphs
specially the spectrum are the same in equivalent signed graphs.
In the following proposition from \cite{NEE} the number of
distinct signed graphs on the labeled graph $G$ is enumerated.
\begin{prop}  \label{dif}{\rm \cite{NEE} If the labeled graph $G$ has $m$ edges, $n$ vertices
and $c$ components, then there are $2^{(m-n+c)}$ distinct signed
graphs on it.}
\end{prop}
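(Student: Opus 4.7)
The plan is to view the set of signatures as an $\mathbb{F}_2$-vector space of dimension $m$ acted on by the group of resignings, and then compute the number of orbits by dividing by the size of a free quotient of this group.

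First I would set up the action. The set of all signatures $s:E\to\{\pm 1\}$ has size $2^m$. For a subset $S\subseteq V(G)$, simultaneous resigning at every vertex of $S$ multiplies the sign of an edge by $-1$ exactly when the edge has exactly one endpoint in $S$. Two subsets $S_1,S_2$ yield the same net operation iff their symmetric difference acts trivially, so the group of resignings is naturally identified with a quotient of $(\mathbb{Z}/2)^n$ (indexed by subsets $S\subseteq V$, with $S_1$ and $S_2$ composing to $S_1\triangle S_2$). Every equivalence class of signatures is exactly an orbit of this group.

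The next step, which is the key computational point, is to determine the kernel of this action. Resigning at $S$ is the identity on signatures iff no edge has exactly one endpoint in $S$, i.e.\ iff $S$ is a union of connected components of $G$. Since $G$ has $c$ components, there are exactly $2^c$ such subsets, so the kernel has order $2^c$ and the effective resigning group has order $2^{n-c}$. By construction this effective group acts freely on signatures (any nontrivial element actually flips the sign of at least one edge), so every orbit has size exactly $2^{n-c}$.

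Therefore the number of equivalence classes is $2^m/2^{n-c}=2^{m-n+c}$, as claimed. The only mildly delicate step is the identification of the kernel with the $2^c$ unions of components; this uses nothing more than the fact that within a connected component, any nonempty proper vertex subset has a nontrivial edge boundary, which is immediate from connectivity.
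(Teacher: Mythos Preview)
Your argument is correct. The paper does not supply its own proof of this proposition; it merely quotes the result from \cite{NEE}, so there is no in-paper argument to compare against. Your orbit-counting approach (signatures as an $\mathbb{F}_2$-space of dimension $m$, resigning group $(\mathbb{Z}/2)^V$ with kernel the $2^c$ unions of components, hence free action of a group of order $2^{n-c}$) is the standard proof and matches what one finds in the cited source. The one point worth phrasing more carefully is the word ``free'': you justify it by noting that any nontrivial resigning flips at least one edge sign, which indeed shows it has \emph{no} fixed signature at all (not merely that it is nontrivial), so freeness follows; this is fine, just make the logical step explicit.
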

It is well-known that any $n\times n$ symmetric matrix has $n$
real eigenvalues. If
$\lambda_1^{m_1},\lambda_2^{m_2},\ldots,\lambda_k^{m_k}$ are the
eigenvalues of a symmetric matrix considering their multiplicity
then the minimal polynomial of it will be equal to
$M(A,x)=\prod_{i=1}^k (x-\lambda_i)$. Hence any symmetric matrix
with only two distinct eigenvalues has a minimal polynomial of
the following form.
$$x^2+ax+b=0.$$

In the following lemmas we mention the basic properties of
possible signed graphs with just two distinct eigenvalues.
\begin{lem} {\rm Suppose the signed
graph $(G,s)$ has just two distinct eigenvalues, then the graph
$G$ is regular. }
\end{lem}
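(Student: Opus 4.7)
The plan is to exploit the minimal polynomial observation recorded just before the lemma. Since $(G,s)$ has exactly two distinct eigenvalues $\lambda_1,\lambda_2$ and $A^s$ is symmetric, it is annihilated by
\[
(A^s)^2 - (\lambda_1+\lambda_2)\,A^s + \lambda_1\lambda_2\, I = O,
\]
so $(A^s)^2 = (\lambda_1+\lambda_2)A^s - \lambda_1\lambda_2\, I$.

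I would then compare the diagonal entries of both sides. On the left, the $(i,i)$ entry of $(A^s)^2$ equals $\sum_{j} A^s(i,j)^2$; since every nonzero entry of $A^s$ is $\pm 1$, this sum counts exactly the neighbors of $v_i$ in the ground graph $G$, i.e.\ $\deg_G(v_i)$. On the right, because $A^s$ has zero diagonal, the $(i,i)$ entry is simply $-\lambda_1\lambda_2$. Equating these gives $\deg_G(v_i) = -\lambda_1\lambda_2$ for every vertex $v_i$, so $G$ is $(-\lambda_1\lambda_2)$-regular.

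There is essentially no obstacle; the only thing to watch is that the degree must be a nonnegative integer, which forces $\lambda_1\lambda_2 \le 0$. If $\lambda_1\lambda_2 = 0$ then all degrees are zero and $G$ is the empty graph (trivially regular), while if $\lambda_1\lambda_2 < 0$ we obtain a positive common degree. Either way the conclusion holds, and no further work is needed.
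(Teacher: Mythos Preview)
Your argument is correct and follows essentially the same route as the paper: use the quadratic minimal polynomial of $A^s$, compare diagonals, and read off $\deg_G(v_i)=-\lambda_1\lambda_2$. The only difference is that you spell out why the $(i,i)$ entry of $(A^s)^2$ equals the degree and add the side remark on the sign of $\lambda_1\lambda_2$, neither of which changes the approach.
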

\begin{proof} {The diagonal entries of the matrix $(A^s)^2$ are the vertex
degrees of $G$. On the other hand if $\lambda,\mu$ are the
distinct eigenvalues of $(G,s)$, then $(A^s)^2-(\lambda+\mu)
A^s+\lambda\mu I=O$. The diagonal entries of the matrix $A^s$ are
zero hence the diagonal entries of $(A^s)^2$ (therefore the vertex
degrees of $G$) should be equal to $-\lambda\mu$, this implies
that $G$ is regular.}
\end{proof}
\subsection{Complete signed graphs}
In this short section we review the existing results on the
signed graph $(K_n,s)$ with only two distinct eigenvalues, where
$K_n$ is the complete graph with $n$ vertices.

A two-graph $\Gamma$ is a set consisting of $3-$subsets of a
finite set $X$, say set of vertices, such that every $4-$subset
of $X$ contains an even number of $3-$subsets of $\Gamma$. A
two-graph is called regular if every pair of vertices lies in the
same number of $3-$subsets of the two-graph. Let $\Gamma$ be a
two-graph on the set $X$. For any $x \in X$, we define a graph
with vertex set $X$ where vertices $y$ and $z$ are adjacent if and
only if $\{x, y, z\}$ is in $\Gamma$. 
For a graph $G$, a signed complete graph $\Sigma$ has been
corresponded on the same vertex set, whose edges have negative
sign if it is an edge of $G$ and positive sign otherwise. Actually
the graph $G$ consists of all vertices and all negative edges of
$\Sigma$. The signed adjacency matrix of a two-graph is the
adjacency matrix of the corresponding signed complete graph.

 In
\cite{TA} it is proved that a two-graph is regular if and only if
its signed adjacency matrix has just two distinct eigenvalues.
Hence any regular two graph gives a complete signed graph with
only two distinct eigenvalues.
For more details about two-graphs see \cite{BCN}. 
%

\textbf{Example. } The following set of $3-$subsets is a regular
two-graph on the set of vertices $\{1,2,\ldots,6\}$.
    $$\{1,2,3\},\{1,2,4\},\{1,3,5\},\{1,4,6\},\{1,5,6\},\{2,3,6\},\{2,4,5\},\{2,5,6\},\{3,4,5\},\{3,4,6\}.$$
For the vertex $1$ of the two-graph, the corresponding graph $G$
is on the vertex set $\{1,\ldots,6\}$ and the edges are
$23,24,35,46,56.$ Hence the corresponding signed complete graph
have the following adjacency matrix. $$A=\begin{pmatrix}
  0 & 1 & 1 & 1 & 1 & 1 \\
  1 & 0 & -1 & -1 & 1 & 1 \\
  1 & -1 & 0 & 1 & -1 & 1 \\
  1 & -1 &1 & 0 & 1 & -1 \\
  1 & 1 & -1 & 1 & 0 & -1 \\
  1 & 1 & 1 & -1 & -1 & 0
\end{pmatrix}.$$
The spectrum of $A$ is $ [2.2361^3,- 2.2361^3]$.

\subsection{Bipartite signed graphs}
In this section we focus on the signed graphs with only two
distinct eigenvalues where the ground graph is bipartite.
\begin{lem} {\rm Let $G$ be a bipartite graph on $2n$ vertices, then the signed graph $(G,s)$ has only two distinct
eigenvalues if and only if its adjacency matrix is of the form
$\begin{pmatrix}
    O_n & C \\
    C^t & O_n\
  \end{pmatrix},$ where $C$ is an orthogonal signed matrix of size $n$.
}
\end{lem}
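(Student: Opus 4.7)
The plan is to exploit the bipartite block form of $A^s$ together with the fact, recalled in the excerpt, that any symmetric matrix with exactly two distinct eigenvalues has a quadratic minimal polynomial.

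First I would note that by the previous lemma a signed graph with two distinct eigenvalues is regular, and a nonempty regular bipartite graph must have color classes of equal size; after ordering the vertices so that one class comes first, $A^s$ automatically takes the block form $\begin{pmatrix} O_n & C \\ C^t & O_n \end{pmatrix}$ for some $n\times n$ $(0,\pm1)$-matrix $C$. So the substantive content of the lemma is the characterization of when $C$ is orthogonal signed, i.e.\ when $CC^t=C^tC=\alpha I_n$ for some positive integer $\alpha$.

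For the forward direction I would use the standard bipartite symmetry of the spectrum: if $\binom{x}{y}$ is a $\lambda$-eigenvector of $A^s$ then $\binom{x}{-y}$ is a $(-\lambda)$-eigenvector. Hence the two distinct eigenvalues must be $\lambda$ and $-\lambda$ for some $\lambda>0$, and the minimal polynomial is $x^2-\lambda^2$, so $(A^s)^2=\lambda^2 I_{2n}$. Squaring the block form gives $(A^s)^2=\begin{pmatrix} CC^t & O \\ O & C^tC \end{pmatrix}$, from which $CC^t=C^tC=\lambda^2 I_n$ falls out; since the diagonal entries of $(A^s)^2$ are vertex degrees, $\lambda^2$ is a positive integer, and $C$ therefore meets the definition of an orthogonal signed matrix. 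For the converse, assuming $CC^t=C^tC=\alpha I_n$ with $\alpha$ a positive integer, the same block computation yields $(A^s)^2=\alpha I_{2n}$, so $A^s$ is annihilated by $x^2-\alpha$ and its eigenvalues lie in $\{\sqrt{\alpha},-\sqrt{\alpha}\}$. Both values actually occur (again by bipartite symmetry, since $A^s\ne O$), giving exactly two distinct eigenvalues.

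I do not foresee a genuine obstacle: the argument is essentially a one-line block computation of $(A^s)^2$. The only minor points needing care are (i) invoking the previous lemma to force the two parts to have equal size, so that $C$ really is a square $n\times n$ matrix to which the definition of ``orthogonal signed matrix'' applies, and (ii) extracting positivity and integrality of $\alpha=\lambda^2$ from the fact that the diagonal of $(A^s)^2$ records the regular degree of $G$.
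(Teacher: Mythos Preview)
Your proof is correct and follows essentially the same approach as the paper: compute $(A^s)^2$ in block form and read off $CC^t=C^tC=\alpha I_n$. The only minor difference is that the paper starts from the generic quadratic $(A^s)^2+\alpha A^s+\beta I=O$ and deduces $\alpha=0$ from the off-diagonal block equation $\alpha C=O$, whereas you first use bipartite spectrum symmetry to see the eigenvalues are $\pm\lambda$, so the linear term vanishes from the outset; your extra care in invoking the regularity lemma to justify that the two parts have equal size (so $C$ is genuinely square) actually fills a small gap the paper glosses over.
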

\begin{proof} {\rm If $G$ is a bipartite graph on $2n$ vertices, then the adjacency
matrix of $(G,s)$ is of the block form $\begin{pmatrix}
    O_n & C \\
    C^t & O_n\
  \end{pmatrix},$ on the other hand we have $(A^s)^2+\alpha
A^s+\beta I_{2n}=O_{2n}$, for some integers $\alpha,\beta$. It
implies
$$\begin{pmatrix}
    O_n & C \\
    C^t & O_n\
  \end{pmatrix}^2+\alpha\begin{pmatrix}
    O_n & C \\
    C^t & O_n\
  \end{pmatrix}+\beta I_{2n}=O_{2n}.$$ Therefore, $$\begin{pmatrix}
    CC^t & O_n \\
    O_n & C^tC\
  \end{pmatrix}+\alpha\begin{pmatrix}
    O_n & C \\
    C^t & O_n\
  \end{pmatrix}+\beta I_{2n}=O_{2n}.$$The above equality yields $$CC^t=C^tC=-\beta I_{n},
   \textrm{  } \alpha C=O_n.$$ Hence $\alpha=0$ and $C$ and $A^s$ are orthogonal signed matrices.
   On the other hand suppose that $C$ is an orthogonal signed matrix of size $n$, with $CC^t=C^tC=\gamma I$.
   Then for the symmetric signed matrix $A^s=C^*,$ we have $(A^s)^2=\gamma I_{2n}$ and hence
    the corresponding signed graph has just two distinct eigenvalues $\pm\sqrt{\gamma}$.   }
\end{proof}
The above lemma implies that any bipartite signed graph with just
two distinct eigenvalues is in correspondence with an orthogonal
signed matrix $C$. Therefore from now on we will refer to
orthogonal signed matrices instead of bipartite signed graphs
with just two distinct eigenvalues. We will denote the
corresponding orthogonal signed matrix of a bipartite signed
graph $(G,s)$ by $C_s(G).$
\begin{cor} {\rm If $G$ is a complete bipartite graph with $2n$ vertices, and $(G,s)$ has only two distinct
eigenvalues, then the matrix $C_s(G)$ is an Hadamard matrix of
size $n$.}
\end{cor}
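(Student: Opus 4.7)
The plan is to combine the preceding lemma with the structural constraint coming from $G$ being complete bipartite. By the lemma, since $(G,s)$ is a bipartite signed graph on $2n$ vertices with exactly two distinct eigenvalues, its adjacency matrix can be written in the block form $\begin{pmatrix} O_n & C \\ C^t & O_n \end{pmatrix}$ with $C = C_s(G)$ an orthogonal signed matrix, i.e.\ an $n\times n$ matrix with entries in $\{0,1,-1\}$ satisfying $CC^t = C^tC = \alpha I_n$ for some positive integer $\alpha$.

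Next I would exploit the hypothesis that the ground graph is $K_{n,n}$. The $(i,j)$ entry of $C$ is nonzero exactly when the $i$-th vertex of one part is adjacent to the $j$-th vertex of the other. Since $G$ is complete bipartite, every such pair is adjacent, so every entry of $C$ must be nonzero and therefore equal to $\pm 1$. This is the key step that upgrades an orthogonal signed matrix to a $\pm 1$ matrix.

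Finally, knowing $C$ is a $\pm 1$ matrix, I compute the diagonal of $CC^t$: each diagonal entry equals the sum of squares of the entries of a row of $C$, which is $n$. Hence $\alpha = n$ and $CC^t = nI_n$, which is precisely the definition of a Hadamard matrix of size $n$.

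There is essentially no obstacle here; the statement is almost immediate from the lemma. The only point worth stating carefully is the passage from ``entries in $\{0,\pm 1\}$'' to ``entries in $\{\pm 1\}$'', which uses the combinatorial fact that the zero/nonzero pattern of $C$ is exactly the biadjacency pattern of the ground graph, together with completeness of $K_{n,n}$.
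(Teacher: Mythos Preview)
Your argument is correct and matches the paper's reasoning: the corollary is stated there without proof, as it follows immediately from the preceding lemma together with the observation that in $K_{n,n}$ every entry of $C_s(G)$ is $\pm 1$, forcing $CC^t=nI_n$.
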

\begin{cor} {\rm If $G$ is a bipartite $(n-1)$-regular graph on $2n$ vertices, and $(G,s)$
has only two distinct eigenvalues, then the matrix $C_s(G)$ is a
Conference matrix of size $n$.}
\end{cor}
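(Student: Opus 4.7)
The plan is to derive the conference matrix conditions entry by entry from the previous lemma, together with a short combinatorial observation about the bipartite complement of $G$.

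First I would apply the preceding lemma to get that $C = C_s(G)$ is an $n \times n$ orthogonal signed matrix, so $C C^t = C^t C = \alpha I_n$ for some positive integer $\alpha$. To pin down $\alpha$, I would read the diagonal of $C C^t$: the $(i,i)$ entry equals the number of nonzero entries in row $i$ of $C$, and this equals the degree of the corresponding vertex of $G$, which is $n-1$. Hence $\alpha = n-1$ and $C^t C = (n-1)I_n$, which is the orthogonality condition required of a conference matrix.

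Next I would account for the zero entries of $C$. Since each row of $C$ records the adjacencies of a fixed vertex $u_i$ of one part with the $n$ vertices of the other part, and the degree is $n-1$, row $i$ has exactly one zero entry and $n-1$ entries equal to $\pm 1$. The same applies to columns. To force the zeros onto the diagonal I would examine the bipartite complement of $G$: it is a bipartite graph in which every vertex has degree $1$, hence it is a perfect matching between the two parts. Relabeling the vertices of each part so that this matching becomes $\{(u_i,v_i) : 1 \le i \le n\}$, the non-edges of $G$ are precisely the pairs $(u_i,v_i)$, so $C(i,i)=0$ and $C(i,j) = \pm 1$ for $i \ne j$.

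Combining these observations, $C$ is an $n\times n$ $(0,\pm 1)$-matrix with zero diagonal, $\pm 1$ elsewhere, and satisfying $C^t C = (n-1)I_n$; this is exactly the definition of a conference matrix $Co_n$. The only substantive step is the matching argument that lets us place the zeros on the diagonal after relabeling; everything else is a direct extraction of parameters from the previous lemma and is essentially bookkeeping.
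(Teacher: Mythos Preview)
Your argument is correct. The paper states this corollary without proof, treating it as an immediate consequence of the preceding lemma, so there is no detailed approach to compare against. Your proof in fact supplies a point the paper glosses over: the definition of a conference matrix in the paper requires zeros precisely on the diagonal, and as written $C_s(G)$ need not satisfy this until one relabels one part of the bipartition along the perfect matching given by the bipartite complement. Your matching argument handles this cleanly; the rest is, as you say, bookkeeping that extracts $\alpha=n-1$ from the row-degree count.
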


\section{Constructing bipartite signed graphs with two distinct eigenvalues}
In this section we first recall some well known methods of
constructing the Hadamard matrices, then apply them to find
orthogonal signed matrices and hence bipartite signed graphs with
just two distinct eigenvalues.

\textbf{Definition.} If $A$ is an $m\times n$ matrix and $B$ is a
$p \times q$ matrix, then the Kronecker product $A\otimes B$ is
the $mp \times nq$ block matrix, $$A\otimes B=\begin{pmatrix}
  a_{11}B & \cdots & a_{1n}B \\
  \vdots & \ddots & \vdots \\
  a_{m1}B & \cdots & a_{mn}B
\end{pmatrix},$$

It is proved that if the spectrum of an $n\times n$ matrix $A$
and an $m\times m$ matrix $B$ are
$\lambda_1,\lambda_2,\ldots,\lambda_n$ and
$\mu_1,\mu_2,\ldots,\mu_m$, respectively, then the spectrum of
$A\otimes B$ is $\lambda_i\mu_j,$ for $i=1,2,\ldots,n$ and
$j=1,2,\ldots,m$, cf. \cite{LIWI}.

The following lemma from \cite{LIWI} will be used for
constructing orthogonal signed matrices from small examples.
\begin{lem} {\rm Let $A$ and $B$ are orthogonal matrices, then the matrix
$A\otimes B$, which is the Kronecker product of $A,B$, is
orthogonal.}
\end{lem}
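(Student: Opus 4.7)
The plan is to verify the two defining conditions of an orthogonal signed matrix for $A\otimes B$: namely, that its entries lie in $\{0,1,-1\}$, and that $(A\otimes B)(A\otimes B)^t = (A\otimes B)^t(A\otimes B) = \gamma I$ for some positive integer $\gamma$.

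First I would check the entry condition. By the defining formula for the Kronecker product, each entry of $A\otimes B$ has the form $a_{ij}b_{kl}$. Since the entries of $A$ and $B$ belong to $\{0,1,-1\}$ by hypothesis, the product $a_{ij}b_{kl}$ also belongs to $\{0,1,-1\}$, so the entry condition is immediate.

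Next I would verify the orthogonality relation. The main tools are two standard identities for the Kronecker product: the transpose rule $(A\otimes B)^t = A^t\otimes B^t$, and the mixed-product property $(A\otimes B)(C\otimes D) = (AC)\otimes(BD)$ (valid whenever the ordinary products are defined). Writing $AA^t = A^tA = \alpha I_n$ and $BB^t = B^tB = \beta I_m$ with $\alpha,\beta$ positive integers, I would compute
\begin{equation*}
(A\otimes B)(A\otimes B)^t = (A\otimes B)(A^t\otimes B^t) = (AA^t)\otimes(BB^t) = (\alpha I_n)\otimes(\beta I_m) = \alpha\beta\, I_{nm}.
\end{equation*}
The symmetric computation shows $(A\otimes B)^t(A\otimes B) = \alpha\beta\, I_{nm}$ as well. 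Setting $\gamma = \alpha\beta$ gives a positive integer, so $A\otimes B$ satisfies the orthogonality condition with parameter $\gamma$.

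There is no real obstacle here; the statement is essentially a direct consequence of the mixed-product property together with the observation that products of $\{0,\pm 1\}$ scalars stay in $\{0,\pm 1\}$. If anything, the only point to be careful about is remembering to verify \emph{both} sides of the orthogonality relation (i.e., $CC^t$ and $C^tC$), which the definition of an orthogonal signed matrix requires, although both follow by the same argument.
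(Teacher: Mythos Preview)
Your proof is correct and complete: the mixed-product property together with the transpose rule gives $(A\otimes B)(A\otimes B)^t=(A\otimes B)^t(A\otimes B)=\alpha\beta\,I$, and the entrywise argument ensures $A\otimes B$ is a $(0,\pm1)$-matrix. Note that the paper does not actually supply its own proof of this lemma; it simply quotes the result from van Lint and Wilson, so your direct verification is exactly the standard argument one would give.
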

The following corollaries are immediate consequences of the above
lemma.
\begin{cor} {\rm If there exist bipartite signed graphs on respectively $2n,2m$
vertices, with only two distinct eigenvalues, then there exists a
bipartite signed graph on $4mn$ vertices, which has only two
distinct eigenvalues.}
\end{cor}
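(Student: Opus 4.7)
The strategy is to take the Kronecker product of the two adjacency matrices, recognise it as the adjacency matrix of a bipartite signed graph on $4mn$ vertices, and then read off its spectrum from the Kronecker-product facts already recorded.

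First I would apply the previous lemma on bipartite signed graphs to each hypothesised example: the $2n$-vertex graph yields an $n\times n$ orthogonal signed matrix $C$ with $CC^t=\gamma I_n$, and the $2m$-vertex graph yields an $m\times m$ orthogonal signed matrix $D$ with $DD^t=\delta I_m$. Equivalently, the full adjacency matrices are the $2n\times 2n$ and $2m\times 2m$ orthogonal signed matrices $A^s=C^*$ and $B^s=D^*$, and they square to $\gamma I_{2n}$ and $\delta I_{2m}$ respectively.

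Next I would form $M=A^s\otimes B^s$, which is a symmetric $4mn\times 4mn$ matrix with entries in $\{0,\pm1\}$ and zero diagonal, hence the adjacency matrix of some signed graph on $4mn$ vertices. To verify that the underlying graph is bipartite I would expand the block form
\[
M=\begin{pmatrix} O_n\otimes B^s & C\otimes B^s \\ C^t\otimes B^s & O_n\otimes B^s \end{pmatrix}=\begin{pmatrix} O_{2mn} & C\otimes B^s \\ (C\otimes B^s)^t & O_{2mn} \end{pmatrix},
\]
which exhibits the block anti-diagonal shape required by the previous bipartite lemma, with off-diagonal block $C\otimes B^s$ of size $2mn$.

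Finally, the spectrum is read off from the Kronecker-product rule on eigenvalues quoted just before the Kronecker lemma: the eigenvalues of $M$ are all products $\lambda_i\mu_j$, and since $A^s$ has only the eigenvalues $\pm\sqrt{\gamma}$ and $B^s$ has only $\pm\sqrt{\delta}$, the matrix $M$ has exactly the two distinct eigenvalues $\pm\sqrt{\gamma\delta}$. No serious obstacle arises; the only step deserving a careful line is the block-structure check that preserves bipartiteness, and that reduces to the identity $O_n\otimes B^s=O_{2mn}$.
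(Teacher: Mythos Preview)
Your argument is correct and follows the Kronecker-product route the paper has in mind: the corollary is recorded as an ``immediate consequence'' of Lemma~3, with no proof given, and your construction $A^s\otimes B^s=(C\otimes B^s)^*$ together with the eigenvalue rule for Kronecker products fills in exactly the details one would expect.

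One small remark on the comparison: the most literal application of the paper's framework would be to tensor the \emph{off-diagonal blocks} $C$ and $D$ directly. By Lemma~3 the matrix $C\otimes D$ is an orthogonal signed matrix of size $mn$, and then Lemma~2 yields a bipartite signed graph on only $2mn$ vertices with two distinct eigenvalues, which is a sharper conclusion than the stated $4mn$. Your choice to tensor the full adjacency matrices $C^*\otimes D^*$ lands precisely on the $4mn$ of the statement and requires the extra (easy) bipartiteness check you carry out; both routes are valid, and yours has the virtue of matching the corollary as written.
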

\begin{cor} {\rm For any positive integer $n$, there is an orthogonal
signed matrix of size $2n$, and hence a bipartite signed graph on
$4n$ vertices and only two distinct eigenvalues }
\end{cor}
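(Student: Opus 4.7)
The corollary reduces to producing an orthogonal signed matrix of size $2n$ for every positive integer $n$; once such a matrix $C$ is in hand, the bipartite characterization (Lemma 2) immediately supplies the associated bipartite signed graph on $4n$ vertices whose eigenvalues are $\pm\sqrt{\alpha}$, where $CC^t=\alpha I_{2n}$. So the whole task is to exhibit one such $C$ for each $n$.

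My plan is to set
\[
C \;=\; I_n \otimes H_2, \qquad \text{where } H_2 \;=\; \begin{pmatrix} 1 & 1 \\ 1 & -1 \end{pmatrix}.
\]
Both factors are orthogonal signed matrices in the sense of the definition: the identity $I_n$ has entries in $\{0,1,-1\}$ and satisfies $I_n I_n^t = 1\cdot I_n$, while the $2\times 2$ Hadamard matrix $H_2$ satisfies $H_2 H_2^t = 2I_2$. The preceding Kronecker-product lemma (Lemma 3) therefore guarantees that $C$ is an orthogonal signed matrix of size $2n$, and the explicit check
\[
CC^t \;=\; (I_n \otimes H_2)(I_n^t \otimes H_2^t) \;=\; I_n \otimes (H_2 H_2^t) \;=\; 2\,I_{2n}
\]
records that here $\alpha=2$. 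Feeding $C$ into Lemma 2 produces the desired bipartite signed graph on $4n$ vertices (structurally a disjoint union of $n$ signed copies of $K_{2,2}$) whose only eigenvalues are $\pm\sqrt{2}$, as required.

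There is no substantive obstacle: the argument is a direct application of the Kronecker-product lemma to the identity matrix and to the $2\times 2$ Hadamard matrix. Equivalently, if one prefers to sidestep Lemma 3 altogether, one can write down the block matrix
\[
C \;=\; \begin{pmatrix} I_n & I_n \\ I_n & -I_n \end{pmatrix} \;=\; H_2 \otimes I_n
\]
and verify by hand that $CC^t=C^tC=2I_{2n}$, reaching exactly the same conclusion.
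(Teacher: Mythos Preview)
Your proof is correct and follows essentially the same approach as the paper: both take $C=I_n\otimes H_2$ with $H_2$ the $2\times 2$ Hadamard matrix, invoke the Kronecker-product lemma to conclude orthogonality, and then apply Lemma~2 to obtain the bipartite signed graph on $4n$ vertices with eigenvalues $\pm\sqrt{2}$. Your write-up is slightly more explicit (checking $CC^t=2I_{2n}$ directly and noting the equivalent form $H_2\otimes I_n$), but the argument is the same.
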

\begin{proof} {Consider the matrix $A=I_n\otimes H_2$,
where $H_2$ is the Hadamard matrix of size $2$, i.e
$$H_2=\begin{pmatrix}
  1 & 1 \\
  1 & -1
\end{pmatrix}.$$
The matrix $A$ is an orthogonal matrix with spectrum $\pm\sqrt{2}^n$, hence the
symmetric matrix $A^*$ will be the adjacency matrix of a signed
bipartite graph with only two distinct eigenvalues. }
\end{proof}
\begin{lem} {\rm Let $C$ be a symmetric $n\times n$ orthogonal signed matrix with zero
entries on the diagonal. Then the matrix defined bellow is a
symmetric orthogonal signed matrix,
$$B=\begin{pmatrix}
  C+I & C-I \\
  C-I & -C-I
\end{pmatrix}.$$}
\end{lem}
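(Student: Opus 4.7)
The plan is to verify three things in order: (i) $B$ is symmetric; (ii) every entry of $B$ lies in $\{0,1,-1\}$; and (iii) $BB^t$ is a positive scalar multiple of the identity.

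For (i), symmetry is immediate: $C$ is symmetric, so $C+I$, $C-I$, and $-C-I$ are all symmetric, and the two off-diagonal blocks of $B$ coincide. For (ii), this is exactly where the zero-diagonal hypothesis enters. On the diagonal, $C+I$ has entries equal to $1$ (since $C$ has $0$'s there), while $C-I$ and $-C-I$ have entries equal to $-1$; off the diagonal, the added $\pm I$ contributes nothing, so the entries remain those of $\pm C$, which lie in $\{0,\pm 1\}$ by hypothesis.

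The heart of the argument is (iii), which is a routine block-matrix calculation that I would carry out directly. Multiplying $B$ by itself and using that scalar matrices commute with $C$, one gets for the top-left block
\begin{equation*}
(C+I)^2 + (C-I)^2 = 2C^2 + 2I,
\end{equation*}
for the top-right block
\begin{equation*}
(C+I)(C-I) + (C-I)(-C-I) = (C^2-I) - (C^2-I) = 0,
\end{equation*}
for the bottom-left block, by the same cancellation (or using symmetry of $B$), $0$, and for the bottom-right block
\begin{equation*}
(C-I)^2 + (-C-I)^2 = 2C^2 + 2I.
\end{equation*}
Hence $B^2 = (2C^2 + 2I)\oplus(2C^2 + 2I)$. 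Since $C$ is symmetric and orthogonal signed, $C^2 = CC^t = \alpha I_n$ for some positive integer $\alpha$, so $B^2 = 2(\alpha+1)\,I_{2n}$. Combined with symmetry from (i), this gives $BB^t = B^tB = 2(\alpha+1)I_{2n}$, so $B$ is an orthogonal signed matrix, as required.

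I do not expect any real obstacle: the only nontrivial observation is that the zero-diagonal hypothesis is precisely what is needed to keep the diagonal entries of $B$ in $\{0,\pm 1\}$, and the $2\times 2$ block cancellation $(C+I)(C-I) + (C-I)(-C-I)=0$ is exactly engineered so that the cross blocks of $B^2$ vanish without requiring any further commutation relations.
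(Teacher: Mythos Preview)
Your proof is correct and follows essentially the same approach as the paper: verify symmetry, then compute $B^2$ block by block to obtain a scalar multiple of the identity. In fact your write-up is slightly more complete, since you explicitly check that the entries of $B$ lie in $\{0,\pm 1\}$ (which is where the zero-diagonal hypothesis is used) and you obtain the correct constant $2(\alpha+1)$, whereas the paper's computation contains a minor arithmetic slip.
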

\begin{proof} {Since $C$ is symmetric so is $B$. Suppose that $C^2=\alpha I,$ we have $$ {\small B^2=\begin{pmatrix}
  (C+I)^2+(C-I)^2 & (C+I)(C-I)+(C-I)(-C-I) \\
  (C-I)(C+I)+(-C-I)(C-I) & (C-I)^2+(-C-I)^2
\end{pmatrix},}$$$${\small =\begin{pmatrix}
  C^2+2I & O \\
  O & C^2+2I
\end{pmatrix}=\begin{pmatrix}
  (\alpha+2)I & O \\
  O & (\alpha+2)I
\end{pmatrix},}
$$
so the assertion follows.}
\end{proof}
\begin{lem} {\rm Let $C$ be an antisymmetric orthogonal matrix with zero entries on the diagonal.
Then the matrix $C+I$ is an orthogonal matrix. }
\end{lem}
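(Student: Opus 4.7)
The plan is to verify directly the two defining properties of an orthogonal signed matrix: first, that all entries of $C+I$ lie in $\{0,\pm 1\}$, and second, that $(C+I)(C+I)^t$ is a positive integer multiple of the identity. The entry-wise check is immediate: since $C$ is assumed to have zero diagonal, the diagonal of $C+I$ is all ones, while the off-diagonal entries of $C+I$ coincide with those of $C$, which are already in $\{0,\pm 1\}$ by hypothesis.

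For the orthogonality relation, I would use $C^t = -C$ to rewrite $(C+I)^t = I - C$, and then expand
$$(C+I)(C+I)^t = (C+I)(I-C) = C - C^2 + I - C = I - C^2.$$
The hypothesis that $C$ is an orthogonal signed matrix gives $CC^t = \alpha I$ for some positive integer $\alpha$, and combined with $C^t = -C$ this becomes $-C^2 = \alpha I$. Substituting yields $(C+I)(C+I)^t = (\alpha+1)I$, with $\alpha+1$ a positive integer. The identity $(C+I)^t(C+I) = (\alpha+1)I$ follows by the same calculation (or by transposing the previous one), so $C+I$ satisfies both sides of the orthogonality condition.

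There is no serious obstacle; the only thing worth flagging is that the antisymmetry hypothesis is used in an essential way. If $C^t$ were merely arbitrary, the expansion of $(C+I)(C+I)^t$ would produce cross terms $C + C^t$ rather than $C - C = 0$, and these would not vanish. Thus antisymmetry is exactly what makes the linear terms cancel and leaves a scalar multiple of $I$. Noting also that $C$ is antisymmetric with $(0,\pm 1)$-entries forces the zero-diagonal assumption automatically, the statement reduces to the simple algebraic identity above.
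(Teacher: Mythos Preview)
Your proof is correct and follows essentially the same route as the paper: expand $(C+I)(C+I)^t$, use $C^t=-C$ so the linear cross terms cancel, and invoke $CC^t=\alpha I$ to obtain $(\alpha+1)I$. If anything, you are slightly more careful, since you also verify that $C+I$ has entries in $\{0,\pm1\}$, a point the paper leaves implicit.
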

\begin{proof} {Suppose that $CC^t=\alpha I$ for some positive integer $\alpha$.
We have $$(C+I)(C^t+I)=CC^t+C+C^t+I=(\alpha+1) I,$$ since $C$ is an antisymmetric
orthogonal signed matrix. Hence the assertion follows.}
\end{proof}

In the following theorem, we establish a method for constructing
orthogonal matrices, based on the Williamson method (cf.
\cite{LIWI}).
\begin{thm}\label{Wil}{ \rm Let the matrices $A_i, 1\leq i\leq 4$, be symmetric of order
$n,$ with entries $0,\pm1$, having constant number of zeros in
each row and assume that they are mutually commuting with each
other. Consider the matrix $H$ defined bellow, $$H=\left(
                                     \begin{array}{cccc}
                                       A_1 & A_2 & A_3 & A_4 \\
                                       -A_2 & A_1 & -A_4 & A_3 \\
                                       -A_3 & A_4 & A_1 & -A_2 \\
                                       -A_4 & -A_3 & A_2 & A_1 \\
                                     \end{array}
                                   \right),$$ then $H$ is an orthogonal
                                   matrix if and only if $$\sum_{i=1}^4 A_i^2=(\sum_{i=1}^4 k_i)I.$$
                                   Where $k_i$ is the number of non-zero entries in each row of $A_i,$ for $i=1,\ldots,4.$
}
\end{thm}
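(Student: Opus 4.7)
The plan is to verify the orthogonality condition $HH^t = \alpha I_{4n}$ directly, by computing $HH^t$ as a $4 \times 4$ block matrix whose blocks are polynomial expressions in the $A_i$. Since each $A_i$ is symmetric, the transpose of each signed block $\pm A_i$ is itself, so $H^t$ inherits the $4 \times 4$ block pattern of $H$ with rows and columns interchanged, and the computation of $HH^t$ reduces to a bookkeeping exercise using only two facts: each $A_i$ is symmetric, and the $A_i$ mutually commute.

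First I would compute the four diagonal blocks of $HH^t$. Because the signs enter only through products of the form $(\pm A_i)(\pm A_i) = A_i^2$, each diagonal block equals $A_1^2 + A_2^2 + A_3^2 + A_4^2$, independent of the sign pattern. Next I would check the twelve off-diagonal blocks. This is the technical heart: each such block is a sum of four terms of the form $\varepsilon_{pq} A_p A_q$, and the Williamson sign pattern is arranged precisely so that the four terms pair into $+A_p A_q$ and $-A_p A_q$ after invoking $A_p A_q = A_q A_p$. For instance, the $(1,2)$-block evaluates to $-A_1 A_2 + A_2 A_1 - A_3 A_4 + A_4 A_3$, which vanishes by commutativity; the remaining eleven off-diagonal blocks cancel by the same mechanism.

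Having established $HH^t = \mathrm{diag}(S,S,S,S)$ with $S = \sum_{i=1}^4 A_i^2$, it remains to analyze $S$. The $(j,j)$-entry of $A_i^2$ equals $\sum_l (A_i)_{jl}^2$, which counts the non-zero entries in row $j$ of $A_i$; by the constant-row-count hypothesis this is $k_i$, independent of $j$. Hence the diagonal of $S$ is automatically $(\sum_i k_i) I$, and so $HH^t = (\sum_i k_i) I_{4n}$ if and only if the off-diagonal entries of $S$ vanish, which is precisely the condition $\sum_i A_i^2 = (\sum_i k_i) I$. Since a nonsingular matrix satisfying $HH^t = \alpha I$ also satisfies $H^t H = \alpha I$, this settles orthogonality of $H$ in the sense defined earlier.

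The main obstacle is simply the careful bookkeeping over the twelve off-diagonal block sums to confirm that the commutativity cancellations really do occur in every position; I do not anticipate any conceptual difficulty beyond recognising that the Williamson sign pattern is designed exactly so that each mixed product $A_p A_q$ ($p \neq q$) appears with matched opposite signs after the symmetry of $A_p$ is used.
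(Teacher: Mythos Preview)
Your proposal is correct and follows exactly the approach the paper indicates: the paper's proof is the single sentence ``The assertion follows by simply calculating $HH^t$,'' and your write-up is precisely that calculation carried out in detail.
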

\begin{proof} {The assertion follows by simply calculating $HH^t$.}
\end{proof}

\begin{prop} {\rm Let $C$ be a symmetric orthogonal signed matrix of order $n$. Then the matrix
$H$, in the previous theorem, obtained by substituting any of the
following matrices, is an orthogonal signed matrix.

\begin{itemize}
  \item $A_1=A_2=A_3=A_4=C$, or
  \item $A_1=A_2=C$, $A_3=C-I$ and $A_4=C+I$, or
  \item $A_1=A_2=C+I$, $A_3=A_4=C-I$,
\end{itemize}
providing the matrix
  $C$ has zero diagonal in the second and last cases.

}
\end{prop}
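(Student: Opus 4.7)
The plan is to apply Theorem~\ref{Wil} in each of the three cases. For each substitution I must verify the four input conditions of that theorem: (i)~each $A_i$ is symmetric with entries in $\{0,\pm1\}$, (ii)~the $A_i$ pairwise commute, (iii)~each $A_i$ has a constant number $k_i$ of nonzero entries per row, and (iv)~the square-sum identity $\sum_{i=1}^4 A_i^2=\bigl(\sum_{i=1}^4 k_i\bigr)I$ holds.

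I would open with one uniform preliminary observation. Since $C$ is a symmetric orthogonal signed matrix, $C^2=CC^t=\alpha I_n$ for some positive integer $\alpha$, and reading off the diagonal of $C^2$ shows that $\alpha=k$, the (necessarily constant, by orthogonality) number of nonzero entries in each row of $C$. Next, commutativity (ii) is automatic in all three cases because every $A_i$ is a polynomial in $C$ (one of $C$, $C+I$, $C-I$). Symmetry is immediate, and the entries lie in $\{0,\pm1\}$ in every case: in case~1 this is trivial, and in cases~2 and~3 it uses the assumption that $C$ has zero diagonal, so that $C\pm I$ has diagonal $\pm1$ and off-diagonal entries equal to those of $C$. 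That same assumption makes (iii) immediate as well: each $C\pm I$ then has exactly $k+1$ nonzero entries per row, while $C$ itself has $k$.

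The substantive step is (iv), and it reduces to the expansion $(C\pm I)^2=C^2\pm2C+I$ together with $C^2=kI$. In case~1, $\sum A_i^2=4C^2=4kI$ and $\sum k_i=4k$. In case~2, $\sum A_i^2=2C^2+(C-I)^2+(C+I)^2=4C^2+2I=(4k+2)I$, while $\sum k_i=2k+2(k+1)=4k+2$. In case~3, $\sum A_i^2=2(C+I)^2+2(C-I)^2=4C^2+4I=(4k+4)I$, while $\sum k_i=4(k+1)=4k+4$. In each case the linear cross terms $\pm 2C$ cancel because the translates $C+I$ and $C-I$ occur with matching multiplicities, which is precisely what singles out these three recipes. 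Theorem~\ref{Wil} then yields that the resulting $H$ is an orthogonal signed matrix.

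There is no real obstacle beyond careful bookkeeping; the only point worth flagging is the identification of the scalar $\alpha$ in $C^2=\alpha I$ with the row-wise nonzero count $k$, since it is $k$ (not an abstract $\alpha$) that must appear via $\sum k_i$ on the right-hand side of (iv) in order for the theorem's hypothesis to match the computation.
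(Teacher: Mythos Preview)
Your proposal is correct and is exactly the routine verification the paper intends: the paper states this proposition without proof, as an immediate application of Theorem~\ref{Wil}, and you have carried out precisely that application by checking symmetry, $\{0,\pm1\}$-entries, commutativity, constant row counts, and the square-sum identity in each of the three cases. The only remark is that your explicit identification $\alpha=k$ (from the diagonal of $C^2=CC^t$) is a nice touch that the paper leaves implicit.
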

\begin{prop} {\rm If the matrices $A_i$ in Theorem \ref{Wil} are equal to a
given orthogonal matrix $C$, then the matrix $H$ will be
orthogonal.}
\end{prop}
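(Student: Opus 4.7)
The plan is to recognise that, when all four blocks are replaced by the same matrix $C$, the big matrix $H$ is nothing but a Kronecker product, so that the proposition reduces to the Kronecker lemma already proved in this section.

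More precisely, I would introduce the $4\times 4$ sign pattern governing the block arrangement of $H$, namely
$$S=\begin{pmatrix}
  1 & 1 & 1 & 1\\
 -1 & 1 &-1 & 1\\
 -1 & 1 & 1 &-1\\
 -1 &-1 & 1 & 1
\end{pmatrix}.$$
Reading off the $16$ blocks of $H$ when $A_1=A_2=A_3=A_4=C$ and comparing them to $S_{ij}C$, one sees directly that $H=S\otimes C$ in the notation of the Definition of Kronecker product given above. This is the central observation and the only non-routine step.

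Once this identification is in place, I would verify by a short inner-product check on the four rows of $S$ that $SS^{t}=4I_{4}$, i.e.\ $S$ is a Hadamard matrix of order $4$ and in particular an orthogonal signed matrix. Since $C$ is orthogonal by hypothesis, the Kronecker-product lemma recalled in this section (``if $A$ and $B$ are orthogonal, then $A\otimes B$ is orthogonal'') immediately gives that $H=S\otimes C$ is an orthogonal signed matrix, completing the proof. Equivalently, using the spectral fact $\sigma(A\otimes B)=\{\lambda_i\mu_j\}$ cited earlier, $H$ has eigenvalues $\pm 2\sqrt{\alpha}$ where $CC^{t}=\alpha I_{n}$, which also exhibits $HH^{t}=4\alpha I_{4n}$.

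I do not anticipate any real obstacle: the only calculation needed is the verification $SS^{t}=4I_{4}$, and no commutativity, symmetry, or ``constant number of zeros per row'' hypothesis on $C$ is required, because the Kronecker structure bypasses the general Williamson machinery of Theorem~\ref{Wil} entirely. This also clarifies why this statement is genuinely stronger than the first bullet of the preceding proposition, which insisted that $C$ be symmetric.
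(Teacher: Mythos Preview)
Your argument is correct, and it takes a genuinely different route from the paper's own proof. The paper does not notice the Kronecker structure; instead it revisits the computation of $HH^{t}$ from Theorem~\ref{Wil}, observes that in general the off-diagonal blocks vanish precisely when each product $A_iA_j^{t}$ is symmetric, and then remarks that with $A_i=C$ one has $A_iA_j^{t}=CC^{t}=\alpha I$, which is trivially symmetric. Your approach, by contrast, factors $H=S\otimes C$ with $S$ the $4\times 4$ Hadamard sign pattern and invokes the Kronecker lemma already stated in the section, so no block computation is needed at all. What your route buys is economy and a clean explanation of \emph{why} this special case works (it is literally $H_4\otimes C$); what the paper's route buys is the intermediate general criterion ``$A_iA_j^{t}$ symmetric for all $i,j$'', which is of independent interest for non-identical blocks. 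One small caution: your closing spectral remark about eigenvalues $\pm 2\sqrt{\alpha}$ tacitly treats $H$ as symmetric, which it need not be when $C$ is not; the clean version of that aside is simply $(S\otimes C)(S\otimes C)^{t}=SS^{t}\otimes CC^{t}=4\alpha I_{4n}$, which you already have without appealing to spectra.
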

\begin{proof} {In general the matrices $A_i$, defined in Theorem \ref{Wil} do not need to be symmetric.
By calculation we see that the only requirement for the
orthogonality of $H$ is that the matrices $A_iA^t_j$ for
$i,j=1,\ldots,4$ need to be symmetric. Since we have $CC^t=\alpha
I,$ then the assertion follows. }
\end{proof}
One may consider the matrix $C$ in the previous propositions to
be a symmetric conference matrix. An infinite family of symmetric
conference matrices exist, for instance with Paley method we may
construct symmetric conference matrices of sizes $q+1$, where $q$
is a prime power and $q\equiv 1\textrm{ }(\textrm{mod } 4)$. (cf.
\cite{LIWI}, pp. 175--176)

\section{Some Ramanujan graphs}
Bilu and Linial conjectured that every regular Ramanujan graph of
degree $d$ has a signature $s$ on the edge set with the property
$\lambda_1(A^s)\leq 2\sqrt{d-1}$, see \cite{BL}. We will call the
mentioned signature a good signature. In this section for some
families of known Ramanujan graphs, we verify the conjecture of
Bilu and Linial and introduce new families of Ramanujan graphs.
To state our main result we need some lemmas.

It is well-known that if $n\times n$ symmetric matrices $A$ and
$B$ commute then the eigenvalues of $A+B$ are the sum of
eigenvalues of $A$ and $B$, i.e
$$\lambda_i(A+B)=\lambda_i(A)+\lambda_i(B),\textrm{ } i=1,2,\ldots,n.$$

\begin{lem} \label{ram}{\rm If $G$ is a $k-$regular graph on $n$ vertices and
$\frac{1}{4}(k-1)^2+k+2\leq n$ then the complement of $G$, $G^c$
is a Ramanujan graph.}
\end{lem}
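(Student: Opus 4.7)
The plan is to exploit the simple relationship between the spectra of a regular graph and its complement. Since $G$ is $k$-regular on $n$ vertices, the complement $G^c$ is $d$-regular with $d=n-1-k$, so we must show that its second largest eigenvalue is at most $2\sqrt{d-1}=2\sqrt{n-k-2}$.

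First I would record the identity $A(G)+A(G^c)=J_n-I_n$. The all-ones vector $\mathbf{1}$ is an eigenvector of every summand (since $G$ is regular), giving the trivial eigenvalue $n-1-k$ of $A(G^c)$. On the orthogonal complement $\mathbf{1}^{\perp}$, the matrix $J_n$ acts as zero, so any eigenvector of $A(G)$ in $\mathbf{1}^{\perp}$ with eigenvalue $\lambda$ is also an eigenvector of $A(G^c)$ with eigenvalue $-1-\lambda$ (using the commuting-matrices remark made just before the lemma). Consequently, if the eigenvalues of $G$ are $k=\lambda_1\geq\lambda_2\geq\cdots\geq\lambda_n$, the spectrum of $G^c$ is
\[
n-1-k,\ -1-\lambda_n,\ -1-\lambda_{n-1},\ \ldots,\ -1-\lambda_2,
\]
and in particular the second largest eigenvalue of $G^c$ equals $-1-\lambda_n(G)$.

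Next I would use the standard bound $\lambda_n(G)\geq -k$, which holds for every $k$-regular graph (it is the minimum possible eigenvalue). This yields
\[
\lambda_2(G^c)=-1-\lambda_n(G)\leq k-1.
\]
Finally, I would translate the numerical hypothesis: the inequality $\tfrac14(k-1)^2+k+2\leq n$ is equivalent to $(k-1)^2\leq 4(n-k-2)=4(d-1)$, i.e.\ $k-1\leq 2\sqrt{d-1}$. Chaining this with the previous estimate gives $\lambda_2(G^c)\leq 2\sqrt{d-1}$, which is exactly the Ramanujan condition.

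There is no serious obstacle here; the proof is essentially a one-line spectral computation once the complement relation is written down. The only point that deserves care is the comment that $A(G)$ and $J_n-I_n-A(G)$ share the eigenvector $\mathbf{1}$ and are simultaneously diagonalizable on $\mathbf{1}^{\perp}$, so that the spectrum of $G^c$ can indeed be read off term-by-term from that of $G$.
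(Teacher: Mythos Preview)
Your argument is correct and follows essentially the same route as the paper: list the eigenvalues of $G^c$ as $n-k-1,\,-1-\lambda_n(G),\ldots,-1-\lambda_2(G)$, use $\lambda_n(G)\ge -k$ (the paper cites Perron--Frobenius for this) to get $\lambda_2(G^c)\le k-1$, and then rewrite the hypothesis as $(k-1)^2\le 4(n-k-2)$ to conclude. The only difference is cosmetic---you spell out the $A(G)+A(G^c)=J_n-I_n$ identity and the eigenvector decomposition, whereas the paper simply quotes the complement-spectrum formula as well known.
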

\begin{proof} {The graph $G^c$ is a $(n-k-1)-$regular graph, hence
we need to prove that $$\lambda_2(G)\leq 2\sqrt{n-k-2}.$$ It is
well-known that the eigenvalues of the complement of $G$ can be
given in decreasing order as in the following list,
$$n-k-1,-1-\lambda_n(G),-1-\lambda_{n-1}(G),\ldots,-1-\lambda_2(G).$$
By Perron-Frobenius Theorem we know that $\lambda_n(G)\geq -k$
then $\lambda_2(G^c)\leq k-1,$ and the assertion follows by the
following chain of inequalities,
$$\lambda_2(G^c)\leq k-1\Leftrightarrow \frac{1}{4}\lambda^2_2(G^c)\leq \frac{1}{4}(k-1)^2\leq n-(k+2),$$
hence $$\lambda_2(G^c)\leq 2\sqrt{n-k-2}$$ thus $G^c$ is a
Ramanujan graph.    }
\end{proof}
\begin{prop} {\rm Let $C$ be a symmetric orthogonal signed matrix of order
$n$, with $CC^t= \alpha I,$ $\alpha\geq 2$ and $k=n-1-\alpha,$
holds in the inequality $\frac{1}{4}(k-1)^2+k+2\leq n$ then the
ground graph $G$ corresponding to the matrix $C$ is a Ramanujan
graph and $C$ is a good signature of $G$. }
\end{prop}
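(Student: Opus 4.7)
The plan is to first identify the degree of the ground graph $G$ from the identity $C^2=\alpha I$, then to apply Lemma \ref{ram} after passing to complements, and finally to verify the good-signature inequality directly from the spectrum of $C$.

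First I would observe that $C$ is symmetric with entries in $\{0,\pm 1\}$ and zero diagonal (since it is the adjacency matrix of a signed graph), and that $CC^t=\alpha I$ gives $C^2=\alpha I$. The $(i,i)$-entry of $C^2$ equals $\sum_j C_{ij}^2$, which counts the non-zero entries in row $i$ of $C$, i.e.\ the degree of vertex $i$ in the ground graph $G=|C|$. Hence $G$ is $\alpha$-regular on $n$ vertices, and therefore its complement $G^c$ is $(n-1-\alpha)$-regular, i.e.\ $k$-regular with $k=n-1-\alpha$.

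Next, I would apply Lemma \ref{ram} to the graph $H:=G^c$. Its regularity is $k$, the vertex count is $n$, and the numerical hypothesis $\tfrac{1}{4}(k-1)^2+k+2\leq n$ is exactly what is being assumed. The conclusion of the lemma is that $H^c$ is Ramanujan, and $H^c=G$, which gives the first claim.

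Finally, for the good-signature property, the relation $C^2=\alpha I$ shows that the spectrum of $A^s=C$ is $\{\pm\sqrt{\alpha}\}$, so $\lambda_1(A^s)=\sqrt{\alpha}$. With $d=\alpha$ as the common degree of $G$, the Bilu–Linial bound $\lambda_1(A^s)\leq 2\sqrt{d-1}$ becomes $\sqrt{\alpha}\leq 2\sqrt{\alpha-1}$, equivalently $\alpha\leq 4(\alpha-1)$, i.e.\ $\alpha\geq 4/3$, which is implied by the hypothesis $\alpha\geq 2$. There is no serious obstacle here; the only point that might trip a reader is the complementation: the quantity $k$ in the statement is the regularity of $G^c$, not of $G$, so Lemma \ref{ram} has to be invoked with $G^c$ playing the role of its input graph and $G$ appearing as the complement in its conclusion.
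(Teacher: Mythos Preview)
Your argument is correct and follows essentially the same route as the paper: identify $G$ as $\alpha$-regular from the diagonal of $C^2$, apply Lemma~\ref{ram} to the $k$-regular complement $G^c$ to deduce that $G$ is Ramanujan, and check the good-signature bound via $\lambda_1(C)=\sqrt{\alpha}\le 2\sqrt{\alpha-1}$ using $\alpha\ge 2$. Your write-up is in fact more explicit than the paper's about the complementation step, which the paper compresses into a single sentence.
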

\begin{proof} {The graph $G$ is $\alpha-$regular,
on the other hand $C$ is a symmetric orthogonal signed matrix
hence $\lambda_1(C)=\sqrt{\alpha}\leq 2\sqrt{\alpha-1}$. The
complement of $G$ is a regular graph of degree $k=n-1-\alpha.$ By
the assumption and Lemma \ref{ram} the assertion follows. }
\end{proof}
In the case that the orthogonal signed matrix $C$ is not
symmetric we can use it to construct bipartite Ramanujan graphs.
But before that we need some preliminaries. For a bipartite graph
$G$ with bipartition $X,Y$ of vertices, the \textit{bipartite
complement} of $G$ which is denoted by $G_b^c$ is a bipartite
graph on the vertex set $V=X\cup Y$, where a vertex $x\in X$ is
adjacent to a vertex $y\in Y$ if and only if $x$ was not adjacent
to $y$ in $G$.

\begin{lem} {\rm If $G$ is a $k-$regular bipartite Ramanijan graph on the disjoint
sets $X,Y$ of size $n$, and $k\leq \frac{n}{2}$ then the
bipartite complement of $G$ is also a Ramanujan graph.  }
\end{lem}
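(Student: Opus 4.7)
The plan is to work with the $n\times n$ biadjacency matrix $B$ of $G$, so that
$$A(G)=\begin{pmatrix} O & B \\ B^t & O \end{pmatrix}, \qquad A(G_b^c)=\begin{pmatrix} O & J-B \\ (J-B)^t & O \end{pmatrix}.$$
For a bipartite graph, the non-negative eigenvalues of the adjacency matrix are exactly the singular values of the biadjacency matrix, so what must be controlled is $\sigma_2(J-B)$.

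First I would exploit regularity: since $G$ is $k$-regular bipartite, $B\mathbf{1}=k\mathbf{1}$ and $B^t\mathbf{1}=k\mathbf{1}$, so the all-ones vector is a common right and left singular vector of $B$ with singular value $k=\sigma_1(B)$, and $G_b^c$ is $(n-k)$-regular, contributing singular value $n-k=\sigma_1(J-B)$ of $J-B$. The key observation is that for any $v\perp \mathbf{1}$ one has $Jv=0$, hence $(J-B)v=-Bv$; the analogous statement holds on the left. Thus $J-B$ coincides (up to sign) with $B$ on the orthogonal complement of $\mathbf{1}$, and so the singular values of $J-B$ other than the trivial one $n-k$ are exactly the non-trivial singular values of $B$.

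Next I would check that $n-k$ is really the top singular value of $J-B$, i.e.\ that $\sigma_2(J-B)\le n-k$; this is where the hypothesis $k\le n/2$ enters, giving $n-k\ge k\ge \sigma_2(B)$. Hence
$$\sigma_2(J-B)=\sigma_2(B)=\lambda_2(G)\le 2\sqrt{k-1},$$
the last step using that $G$ is Ramanujan. Finally, the inequality $k\le n/2$ again yields $2\sqrt{k-1}\le 2\sqrt{(n-k)-1}$, so $\lambda_2(G_b^c)\le 2\sqrt{(n-k)-1}$, which is precisely the Ramanujan bound for the $(n-k)$-regular graph $G_b^c$.

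There is no real obstacle beyond being careful about the orderings of singular values; the main content is the observation that $J-B$ and $-B$ agree on $\mathbf{1}^\perp$, which turns the Ramanujan property into a purely algebraic inequality settled by $k\le n/2$.
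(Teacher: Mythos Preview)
Your argument is correct and is essentially the same as the paper's: the paper observes that $A(G)$ and $A(K_{n,n})$ commute (because $B$ has constant row and column sums) and concludes that the nontrivial eigenvalues of $G_b^c$ coincide with those of $G$, then finishes with the same chain $\lambda_2(G_b^c)=\lambda_2(G)\le 2\sqrt{k-1}\le 2\sqrt{n-k-1}$. Your singular-value formulation, noting that $J-B$ agrees with $-B$ on $\mathbf{1}^\perp$, is just a repackaging of this commuting-matrices step, arguably a bit more careful about which eigenvalues pair up.
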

\begin{proof} {We have $A(G)+A(G_b^c)=A(K_{n,n})$. The graph $G$ is regular and bipartite, hence the
adjacency matrix of $G$ is of the following form.
$$A(G)=\begin{pmatrix}
  O & B \\
  B^t & O
\end{pmatrix},$$where $B$ is a $(0,1)$-matrix with a fix number of $1$'s on each row and column.
Therefore the matrices $A(G)$ and $A(K_{n,n})$ commute and hence
the eigenvalues of $G_b^c$ follows
$$\lambda_i(G_b^c)=\lambda_i(K_n,n)-\lambda_i(G)=
  \begin{cases}
    n-k & \text{$i=1$}, \\
    \lambda_i(G) & \text{$1<i<2n$,}\\
    k-n& \text{$i=2n$}.
  \end{cases}
$$ By the assumption $\lambda_2(G)\leq 2\sqrt{k-1}$, and $k\leq n-k$ hence
$$\lambda_2(G_b^c)=\lambda_2(G)\leq 2\sqrt{k-1}\leq2\sqrt{n-k-1},$$ therefore $G_b^c$ is a Ramanujan graph. }
\end{proof}

\begin{lem} {\rm The bipartite complement of the graph $kC_4$, ($k\geq 2$) that is the $k$
disjoint copy of $C_4$, is a Ramanujan graph. }
\end{lem}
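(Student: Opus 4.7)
The plan is to apply the preceding bipartite-complement lemma with $G = kC_4$, so the work reduces to checking that $kC_4$ itself is a Ramanujan graph and that its degree meets the inequality $k_{\mathrm{reg}} \le n/2$ required by that lemma.

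First I would identify the relevant parameters. The cycle $C_4$ is $2$-regular bipartite with bipartition classes of size $2$, and has spectrum $\{2, 0, 0, -2\}$. Since $kC_4$ is a disjoint union of $k$ copies, its adjacency matrix is block diagonal, so $kC_4$ is $2$-regular bipartite on bipartition classes $X, Y$ of size $n = 2k$ and has spectrum $\{2^k, 0^{2k}, (-2)^k\}$.

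Next I would verify the Ramanujan property of $kC_4$ itself. The second-largest eigenvalue is $\lambda_2(kC_4) = 2$ (equal to $\lambda_1$, since the graph has $k$ connected components each contributing a top eigenvalue of $2$), while the Ramanujan bound for degree $d=2$ is $2\sqrt{d-1} = 2$; so the definition holds with equality. The degree condition $2 = k_{\mathrm{reg}} \le n/2 = k$ is exactly the assumption $k \ge 2$.

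With these observations in place, the previous lemma yields directly that the bipartite complement of $kC_4$ is Ramanujan. The only subtle point, rather than a genuine obstacle, is that $kC_4$ is only borderline Ramanujan ($\lambda_2$ exactly meets the bound), which is nonetheless permitted by the paper's weak-inequality definition.
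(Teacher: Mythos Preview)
Your proof is correct: verifying that $kC_4$ is a $2$-regular bipartite Ramanujan graph on parts of size $2k$ with $2\le k$, and then invoking the preceding lemma as a black box, is entirely valid. The borderline case $\lambda_2=2=2\sqrt{d-1}$ is indeed permitted by the paper's definition.

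The paper takes a slightly different route. Rather than checking that $kC_4$ is Ramanujan and citing the lemma, it reuses the \emph{method} from the lemma's proof---the commuting of $A(kC_4)$ with $A(K_{2k,2k})$---to compute the spectrum of $(kC_4)_b^c$ explicitly as $\pm(2k-2),\ (\pm 2)^{k-1},\ 0^{2k}$, and then reads off the Ramanujan property from that list. Your approach is cleaner as a proof of the stated lemma; the paper's direct computation has the advantage of producing the full spectrum, which is needed later in the table of Ramanujan graphs at the end of the section.
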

\begin{proof}{As disscused in the proof of the above lemma, the eigenvalues of $(kC_4)^c_b$
can be calculated by the eigenvalues of the graph $K_{2k,2k}$ and
the graph $kC_4$. Thus the spectrum of $(kC_4)^c_b$ is
$\pm(2k-2),(\pm 2)^{k-1},0^{2k}$, hence the assertion follows.  }
\end{proof}

\begin{lem} {\rm If $C$ is an orthogonal signed matrix of size $n$ and the ground graph $G$
corresponding to  $C^*$ is a Ramanujan graph, then $C^*$ is a
good signature of $G$. }
\end{lem}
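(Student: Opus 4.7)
The plan is to combine the spectrum calculation from Lemma 2 of the excerpt with the elementary inequality $\sqrt{d}\leq 2\sqrt{d-1}$ valid for $d\geq 2$. From the earlier lemma, since $C$ is an orthogonal signed matrix with $CC^t=\alpha I$, one has $(C^*)^2=\alpha I_{2n}$, so the eigenvalues of $C^*$ are exactly $\pm\sqrt{\alpha}$, and in particular $\lambda_1(C^*)=\sqrt{\alpha}$. The task thus reduces to comparing $\sqrt{\alpha}$ with the Ramanujan bound $2\sqrt{d-1}$ for the common degree $d$ of the ground graph.

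The key identification I would use is that $\alpha$ coincides with $d$. Indeed, because $C$ has entries in $\{0,\pm1\}$, the diagonal entry $(CC^t)_{ii}=\sum_j C_{ij}^2$ counts the number of non-zero entries in row $i$ of $C$; these row-counts must all equal $\alpha$, and each such count is exactly the degree of the corresponding vertex in the bipartite ground graph whose biadjacency matrix is $|C|$. Applying the same observation to $C^tC=\alpha I$ handles the other side of the bipartition, so the ground graph $G$ of $C^*$ is $\alpha$-regular and $d=\alpha$.

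With the identification $\lambda_1(C^*)=\sqrt{d}$ in hand, it suffices to verify $\sqrt{d}\leq 2\sqrt{d-1}$. Squaring yields the equivalent inequality $d\leq 4(d-1)$, i.e., $d\geq 4/3$, which holds for every integer $d\geq 2$. The boundary case $d=1$ is degenerate: the ground graph would be a perfect matching and the Ramanujan bound $2\sqrt{d-1}$ would vanish, so the hypothesis that $G$ is a Ramanujan graph forces $d\geq 2$. Hence $\lambda_1(C^*)\leq 2\sqrt{d-1}$, and $C^*$ is a good signature of $G$.

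There is no real obstacle here; the argument is essentially a one-line consequence of the spectrum formula $(C^*)^2=\alpha I$. The only substantive step is recognizing that the orthogonality scalar $\alpha$ is forced, by the $\{0,\pm 1\}$-structure of $C$, to equal the common degree of the ground graph, after which the desired bound becomes the trivial arithmetic inequality $\sqrt{d}\leq 2\sqrt{d-1}$.
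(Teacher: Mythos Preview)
Your argument is correct and is precisely the one the paper has in mind: the paper states this lemma without proof, but the identical computation $\lambda_1=\sqrt{\alpha}\leq 2\sqrt{\alpha-1}$ (for $\alpha\geq 2$) appears verbatim in the proof of the preceding Proposition~4 for the symmetric case, and your observation that $\alpha$ equals the degree is exactly Lemma~1 read in reverse. The only quibble is your dismissal of $d=1$: a single edge $K_2$ has spectrum $\{1,-1\}$ and so satisfies $\lambda_2=-1\leq 0=2\sqrt{d-1}$, hence is technically Ramanujan under the paper's definition, while $\lambda_1(C^*)=1>0$ would fail the good-signature bound; but this degenerate case is clearly outside the intended scope and does not affect the substance of the lemma.
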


For a signature $s$ on the edges of the graph $G$ the $2-$lift
$\tilde{G}_s$ of the graph $G$ is a graph on the vertex set
$V(G)\times \{1,2\}$, where there is an edge between $(u,i)$ and
$(v,j)$ if and only if
$$ \{u,v\}\in
\textrm{E}(G) \textrm{ and }
  \begin{cases}
    i=j& \text{ if }s(uv)=1, \\
    i=\bar{j} & \text{otherwise}.
  \end{cases}
,$$where $\bar{1}=2$ and $\bar{2}=1.$  In Figure 1, we illustrate
the definition. Note that in the figure the negative edge is
denoted by dot line. In the graph $\tilde{G}_s$, the vertices of
$(G,s)$ are duplicated and the positive edges are replaced by two
parallel lines and the negative edge is replaced by two crossed
edges.

\vspace{1cm}
${\hspace{1cm}\put(100,0){\line(0,1){50}}\put(100,0){\line(1,1){50}}\put(150,0){\vdots
 }\put(150,12){\vdots}\put(150,24){\vdots}\put(150,36){\vdots}\put(150,48){.}
 \put(150,0){\line(-1,1){50}}\put(100,0){\circle*{5}}\put(150,50){\circle*{5}}
 \put(100,50){\circle*{5}}\put(150,0){\circle*{5}}\put(170,25){$\longrightarrow$}
 \put(210,0){\circle*{4}}\put(220,0){\circle*{4}}\put(210,50){\circle*{4}}\put(220,50){\circle*{4}}
 \put(270,0){\circle*{4}}\put(260,0){\circle*{4}}\put(270,50){\circle*{4}}\put(260,50){\circle*{4}}
 \put(210,0){\line(0,1){50}}\put(220,0){\line(0,1){50}}\put(210,0){\line(1,1){50}}\put(220,0){\line(1,1){50}}
 \put(270,0){\line(-1,1){50}}\put(260,0){\line(-1,1){50}}\put(270,0){\line(-1,5){10}}\put(260,0){\line(1,5){10}}}$
$$\hspace{-1cm}(G,s)\hspace{3.5cm}\tilde{G}_{s}$$
 $$\textrm{\textbf{Figure1.} 2-\textrm{lift of } G \textrm{ corresponding to the specific signature}}$$


 The spectrum of the graph $\tilde{G}_s$ is determined in
\cite{LPV}.
\begin{lem} \label{eig1} {\rm \cite{LPV} The spectrum of $A(\tilde{G}_s)$ is the
multi set union of spectrum of $A$ and $A^s.$}
\end{lem}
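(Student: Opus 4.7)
The plan is to exhibit the adjacency matrix of $\tilde{G}_s$ as an explicit block matrix in terms of $A$ and $A^s$, and then block-diagonalize it by an orthogonal change of basis.

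First I would split the ordinary adjacency matrix as $A=A^{+}+A^{-}$, where $A^{+}$ carries the $1$-entries corresponding to positively signed edges and $A^{-}$ carries the $1$-entries corresponding to negatively signed edges. By construction $A^s=A^{+}-A^{-}$. Ordering the vertices of $\tilde{G}_s=V(G)\times\{1,2\}$ first by the copy and then by the original vertex, the definition of the $2$-lift (positive edges stay inside each copy, negative edges cross between the copies) gives immediately
\[
A(\tilde{G}_s)=\begin{pmatrix} A^{+} & A^{-} \\ A^{-} & A^{+}\end{pmatrix}.
\]

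Next I would conjugate by the orthogonal matrix $U=\tfrac{1}{\sqrt{2}}\begin{pmatrix} I_n & I_n \\ I_n & -I_n\end{pmatrix}$, which satisfies $U=U^t=U^{-1}$. A short calculation yields
\[
U\,A(\tilde{G}_s)\,U=\begin{pmatrix} A^{+}+A^{-} & 0 \\ 0 & A^{+}-A^{-}\end{pmatrix}=\begin{pmatrix} A & 0 \\ 0 & A^s\end{pmatrix}.
\]
Since $U$ is orthogonal, $A(\tilde{G}_s)$ is orthogonally similar to this block-diagonal matrix, so its spectrum is the multiset union of the spectra of $A$ and of $A^s$.

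There is no real obstacle here; the only care needed is in setting up the block decomposition of $A(\tilde{G}_s)$ correctly from the definition of the $2$-lift and in verifying the $U$-conjugation identity, both of which are routine $2\times 2$ block manipulations.
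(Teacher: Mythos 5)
Your proof is correct. Note, however, that the paper itself gives no proof of this lemma: it is quoted from the reference [LPV] (Liu, Peyerimhoff, Vdovina) as a known result, so there is no in-paper argument to compare against. Your argument is the standard one and is complete: the decomposition $A=A^{+}+A^{-}$, $A^{s}=A^{+}-A^{-}$ together with the definition of the $2$-lift (positive edges parallel within each copy, negative edges crossing) correctly yields
$$A(\tilde{G}_s)=\begin{pmatrix} A^{+} & A^{-} \\ A^{-} & A^{+}\end{pmatrix},$$
and conjugation by the involutive orthogonal matrix $U=\tfrac{1}{\sqrt{2}}\bigl(\begin{smallmatrix} I & I\\ I & -I\end{smallmatrix}\bigr)$ block-diagonalizes it into $\mathrm{diag}(A,\,A^{s})$, which gives the claimed multiset union of spectra. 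The one thing worth stating explicitly, if you want the write-up airtight, is that both diagonal blocks of $A(\tilde{G}_s)$ equal $A^{+}$ and both off-diagonal blocks equal $A^{-}$ precisely because the adjacency rule for the lift is symmetric in the two copies; once that is observed, the rest is, as you say, routine.
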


By the above Lemma the authors of the paper \cite{MSS} conclude
that for any Ramanujan graph $G$ and a good signature $s$ of it,
the graph $\tilde{G}_s$ is a Ramanujan graph. As a final result,
in the following table we list some Ramanujan graphs and a good
signature of them. Suppose that $H_n$ is a Hadamard matrix and
$Co_n$ is a conference matrix of order $n$.

\vspace{1cm} {\hspace{2cm} \begin{tabular}{|c|c|c|}\hline
  $G$ & a good signature $A^s$ & eigenvalues of $\tilde{G}_s$ \\\hline
  $K_{n,n}$ & $H_n^*$& $\pm n,\pm\sqrt{n}^{n},0^{2n-2}$ \\\hline
  $K_{n,n}\setminus M$ & $Co_n^*$ & $\pm (n-1),\pm\sqrt{n-1}^{n},\pm 1^{n-1}$ \\\hline
 $(nC_4)^c_b$ &$\begin{pmatrix}
   Co_n & Co_n \\
   -Co_n & Co_n \
 \end{pmatrix}^*$ & $\pm(2n-2),(\pm 2)^{n-1},0^{2n}$ \\\hline
 \end{tabular}}

\paragraph{Acknowledgment} We would like to thank B. Tayfeh-Rezaie
for attracting our attention to the problem studied in this paper.
We also would like to thank the reviewer for his/her detailed
comments and useful suggestions. This research was in part
supported by a grant from IPM (No. 95050013).

\end{document}